\newtheorem{st}{Statement}
\newcommand{\Z}{\mathbb{Z}}
\newcommand{\R}{\mathbb{R}}
\newcommand{\bx}{\mathbf{x}}
\newcommand{\Prob}{\mathbb{P}}
\newcommand{\D}{\mathbb D}
\newtheorem{lm}{Lemma}
\newtheorem{tm}{Theorem}
\newtheorem{cor}{Corollary}
\newcommand{\area}{\mathrm{area}\,}
\newcommand{\per}{\mathrm{per}\,}
\newcommand{\vol}{\mathrm{vol}\,}
\newcommand{\cov}{\mathrm{cov}\,}
\newcommand{\deq}{\stackrel{D}{=}}
\newcommand{\ceq}{\stackrel{C}{=}}
\newcommand{\cdeq}{\stackrel{CD}{=}}
\newcommand{\E}{\mathbb E}
\newcommand{\ind}{\mathbbm 1}
\begin{document}

\pagestyle{plain}
\title{Lattice points inside a random shifted integer polygon}
\date{}
\author{Aleksandr Tokmachev\thanks{The work was supported by the Theoretical Physics and Mathematics Advancement Foundation «BASIS».}}
\maketitle
\begin{abstract}
Consider a convex body $C \subset \R^d$. Let $X$ be a random point with uniform distribution in $[0,1]^d$.  Consider the value $X_C$ equal to the number of lattice points $\Z^d$ inside the body $C$ shifted by $X$. It is well known that $\E X_C = \vol(C)$. The question arises: what can be said about the variance of this random variable? This paper answers this question in the case when $C$ is a polygon with vertices at integer points. Moreover, an explicit distribution of $X_T$ is given for the integer triangle $T$.
\end{abstract}

\section{Introduction}
Consider the $d$-dimensional space $\R^d$ and the lattice $\Z^d$. Let $C$ be a convex body in $\R^d$, i.e., a convex compact with nonempty interior. Consider a random point $X$ with uniform distribution inside the unit cube $[0,1]^d$. Let $X_C$ be a number of lattice points inside $C + X$:
\begin{align}
    X_C = |(C + X) \cap \Z^d|,  \nonumber
\end{align}
where $X$ is a random point uniformly distributed in $[0,1]^d$. It is a well known result that the mathematical expectation of $X_C$ is equal to the volume of the $C$:
\begin{align}
    \E X_C = \vol (C).
\end{align}

The question arises: what can we say about the variance of $X_C$? In the case when $C$ is a disk of large radius this question is related with the \textit{circle problem}, which asks for the number of lattice points in the disk $rB^2$, of radius $r$ centred at the origin. The author directs the reader to \cite{Strombergsson} for further insight into this connection and potential generalizations. Additionally, we cite the paper \cite{Janacek}, which examines the asymptotic of the variance of $rC$ in the case when the body $C$ differs from the ball.

In this paper, we consider the two--dimensional case where the body is an integer polygon, that is, a polygon whose vertices all have integer coordinates. To underscore the fact that we are discussing a two-dimensional polygon, we will denote the body as $P$. In this case, the mathematical expectation of $X_P$ is equal to the area of $P$, which can be calculated using the number of points inside the polygon and on its sides according to Pick's theorem [3]. We will prove that the variance depends only on the number of points on the sides of the polygon. Moreover, we obtain an explicit formula for the distribution of $X_P$ when $P$ is an integer triangle.

The paper is organized as follows: Section 2 provides the necessary definitions and basic statements about the distribution of $X_P$. Section 3 discusses results related to the variance of $X_P$. Section 4 presents the explicit distribution for a triangle with integer vertices. Finally, the remaining sections give complete proofs of the formulated lemmas and theorems.

\section{Preliminaries}
We will begin by establishing the notations that will be employed throughout this paper.

\textbf{Notations.}
\begin{itemize}
\item An \textit{integer point} is defined as a point whose both coordinates are integers. Similarly, an \textit{integer vector or polygon} is defined as a vector or polygon with integer vertices. We identify integer vectors and integer points as follows: $m \leftrightarrow \overrightarrow{0m}$.

\item We use the standard notation $X \deq Y$ for identically distributed random variables and use $X \ceq Y$ for random variables such that $X - \E X = Y - \E Y $ a.s. Note that $X \ceq Y$ iff exist a constant $C$ such that $X = Y + C$ a.s. We also write $X \cdeq Y$ if $X - \E X \deq Y - \E Y$. 

\item For two sets $A$ and $B$ we denote by $A \oplus B$ the Minkowski sum of these sets:
$$  
    A \oplus B = \{a + b \colon a \in A,~b \in B\}.
$$

\item For two vectors \( v \) and \( w \), let \( v \oplus w \) denote the parallelogram whose vertices are \( 0 \), \( v \), \( w \), and \( v + w \), considered as points, and let \( v_1 \wedge v_2 \) denote the oriented area of \( v_1 \oplus v_2 \): if \( v_1 = (x_1, y_1) \) and \( v_2 = (x_2, y_2) \), then \( v_1 \wedge v_2 = x_1y_2 - x_2y_1 \).

\item In the context of a polygon, the term "sides" is used to refer to a set of vectors representing the counterclockwise--oriented sides of the polygon. We call the \textit{affine length} of a side of an integer polygon the number of segments into which integer points divide that side. We call an \textit{affine perimeter} the sum of the affine lengths of sides.
\end{itemize}

Let $v \in \Z^2$ be an integer vector. Denote by $X_v(x)$ the number of integer points inside the parallelogram $x \oplus v$ with ''orientation'':
$$
    X_v(\bx) =
    \begin{cases}
        |(\bx \oplus v) \cap \Z^2| + |v|_{aff} \quad &\text{if } \bx \wedge v \geqslant 0,\\
        -|(\bx \oplus v) \cap \Z^2| \quad &\text{if } \bx \wedge v < 0.
    \end{cases}
$$  
It is handy to use another form for the distribution of $X_v$. 
\begin{st}\label{vecdist}
    For any integer vector $v \in \Z^2$ the following holds 
    \begin{align}
        X_v \ceq |v|_{aff}\left\lceil X \wedge \frac{v}{|v|_{aff}}  \right\rceil,
    \end{align} 
    where \(\lceil \cdot \rceil\) denotes the ceiling function.
\end{st}

The following statement contains the simplest properties of $X_P$.

\begin{st}\label{baza} Let an integer polygon $P$ with sides $v_1, v_2, \ldots v_n$ be given. Then:
    \begin{enumerate}
    
        \item $\E X_P = \area (P)$.

        \item For any $A \in SL(2, \Z)$, $$X_{AP} \deq X_P.$$

        \item $X_{-P} \deq X_P$.

        \item $X_P \ceq  X_{v_1} + X_{v_2} + \ldots + X_{v_n}.$

        \item For any integer polygons $P$ and $Q$,
        $$
            X_{P \oplus Q} \ceq X_P + X_Q, 
        $$
        where $P \oplus Q$ is a Minkowski sum of $P$ and $Q$.

         \item If $P$ is centrally symmetric, then $X_P = const$ a.s.

        \item $X_P \cdeq -X_p$, i.e. the random variable $X_P - \E X_P$ has a symmetric distribution. 
        
    \end{enumerate}
\end{st}
The proofs of the statements can be found in Section~\ref{statproof}.

\section{Variance}
In this section, we derive the variance of a random variable generated by an integer polygon. Furthermore, we will obtain a formula expressing the covariance of two random variables generated by integer polygons. 

Before formulating the main result about the variance, it is necessary to perform a series of transformations on the polygon. Assume that the integer points on the sides of the polygon are also its vertices and number them in counterclockwise order $a_1a_2\ldots a_N$. Let $v_i = \overrightarrow{a_ia_{i+1}}$. Suppose that $v_i = - v_j$ for some $i$ and $j$. Then the quadrilateral $a_ia_{i+1}a_{j-1}a_{j}$ is a parallelogram. By Statement~\ref{baza}, $X_{a_ia_{i+1}a_{j-1}a_j} = const$ a.s. Therefore, if we cut this parallelogram out of our polygon, the random variable will change to a constant, and so the variance will not change. Hence, without loss of generality, we can assume that the polygon has no parallel sides.
\begin{tm}\label{variance}
    Let $P$ be a polygon with no parallel sides and $l_1, l_2, \ldots l_n$ be their affine lengths. Then 
    \begin{align}
        \D X_P = \frac{l_1^2 + l_2^2 + \ldots + l_n^2}{12}. \nonumber
    \end{align}
\end{tm}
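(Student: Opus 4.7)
The strategy is to express $X_P - \E X_P$ as a linear combination of fractional parts of linear forms in $X$ and exploit pairwise independence of those fractional parts on the $2$-torus.

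By Statement~\ref{baza}(4), $X_P - \E X_P = \sum_{i=1}^n (X_{v_i} - \E X_{v_i})$, and by Statement~\ref{vecdist}, $X_{v_i} \ceq l_i \lceil U_i \rceil$ where $U_i := X \wedge (v_i/l_i)$. Almost surely $U_i \notin \Z$, so $\lceil U_i\rceil = U_i + (1-\{U_i\})$; set $W_i := 1-\{U_i\}$. By linearity of the wedge product,
$$
\sum_i l_i U_i \;=\; X \wedge \sum_i v_i \;=\; 0,
$$
because the polygon is closed. Consequently
$$
X_P - \E X_P \;=\; \sum_i l_i\bigl(\lceil U_i \rceil - \E \lceil U_i \rceil\bigr) \;=\; \sum_i l_i (W_i - \E W_i).
$$

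The next step is to analyse the joint distribution of $W_1,\ldots,W_n$. Writing $v_i/l_i = (p_i,q_i)$, the vector $(p_i,q_i)$ is primitive in $\Z^2$, so a direct computation (conditioning on one coordinate of $X$) shows that $\{U_i\}$, and hence $W_i$, is uniformly distributed on $[0,1]$; in particular $\D W_i = 1/12$. For $i \neq j$ I claim $W_i$ and $W_j$ are independent. Consider the integer matrix
$$
M \;=\; \begin{pmatrix} q_i & -p_i \\ q_j & -p_j \end{pmatrix}, \qquad \det M \;=\; p_i q_j - p_j q_i,
$$
which is nonzero because $v_i$ and $v_j$ are not parallel. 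Since $M$ has integer entries, it descends to a continuous surjective group homomorphism $\bar M : \R^2/\Z^2 \to \R^2/\Z^2$, a $|\det M|$-sheeted covering of the torus, which therefore pushes the Haar (Lebesgue) probability measure forward to itself. Applying this with $X$ regarded as a uniform point of the torus, $(\{U_i\},\{U_j\}) = \bar M(X)$ is uniform on $[0,1]^2$, yielding independence of $W_i$ and $W_j$.

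Combining these facts, the $W_i$ are pairwise independent, each uniform on $[0,1]$, so
$$
\D X_P \;=\; \D\!\left(\sum_i l_i W_i\right) \;=\; \sum_i l_i^2 \, \D W_i \;=\; \frac{l_1^2 + l_2^2 + \cdots + l_n^2}{12}.
$$
The main obstacle is the pairwise-independence claim; the no-parallel-sides hypothesis enters precisely to guarantee $\det M \neq 0$, and the closedness identity $\sum_i l_i U_i = 0$ is the structural observation that converts the integer-valued $X_{v_i}$ into the much more tractable uniform variables $W_i$.
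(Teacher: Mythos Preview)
Your proof is correct and takes a genuinely different route from the paper. The paper deduces Theorem~\ref{variance} as an immediate corollary of the covariance formula (Theorem~\ref{covariance}), whose proof is Fourier--analytic: Lemma~\ref{general} expresses mixed central moments via the Poisson summation formula, and Lemma~\ref{indfour} computes $\hat\ind_P(m)$ explicitly for an integer polygon; summing $\hat\ind_P(m)\hat\ind_P(-m)$ over $m\in\Z^2\setminus 0$ then yields $\tfrac{1}{12}\sum_{i,j} v_i\circ v_j$, which collapses to $\tfrac{1}{12}\sum l_i^2$ under the no--parallel--sides hypothesis. Your argument instead stays on the probabilistic side: using Statements~\ref{vecdist} and~\ref{baza}(4) together with the closing identity $\sum_i l_i U_i=0$, you reduce $X_P-\E X_P$ to $\sum_i l_i(W_i-\E W_i)$ with $W_i=1-\{X\wedge(v_i/l_i)\}$, and then invoke the elementary fact that a nondegenerate integer linear map of the $2$-torus pushes Haar measure to itself, so the $W_i$ are pairwise independent uniforms on $[0,1]$. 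The paper's approach is more general --- it simultaneously delivers the covariance of two polygons and a framework for higher moments --- while yours is more elementary, avoids Fourier analysis entirely, and makes the role of the no--parallel--sides hypothesis (precisely $\det M\neq 0$) completely transparent; incidentally, your reduction to the variables $W_i$ is essentially the same manoeuvre the paper carries out in Step~1 of the triangle theorem.
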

This theorem is a special case of the result for the covariance to which we turn now. First of all, let us introduce necessary notation. For two integer vectors $v, w \in \Z^2$ with affine lengths $|v|_{aff}, |w|_{aff}$ we define the function 
\begin{align}
    v \circ w = 
    \begin{cases}
        |v|_{aff}|w|_{aff}\quad &\text{ if } v \upuparrows w,\\
        -|v|_{aff}|w|_{aff}\quad &\text{ if } -v \upuparrows w,\\
        0\quad &\text{otherwise.}
    \end{cases}
\end{align}
\begin{tm}\label{covariance}
    Let $P$ and $Q$ be integer polygons with sides $v_1, v_2, \ldots v_{n_1}$  and $w_1, w_2, \ldots, w_{n_2}$ oriented in counterclockwise order. Then 
    \begin{align}
        \mathrm{cov} (X_P, X_Q) = \frac{1}{12} \sum_{i = 1}^{n_1} \sum_{j = 1}^{n_2} v_i \circ w_j.  \nonumber
    \end{align}
\end{tm}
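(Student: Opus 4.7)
The plan is to reduce the covariance to per-side contributions via Statement~\ref{baza} and evaluate each via Fourier analysis on the torus $[0,1]^2$.

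By item 4 of Statement~\ref{baza}, $X_P \ceq \sum_i X_{v_i}$ and likewise for $Q$, so bilinearity gives $\mathrm{cov}(X_P, X_Q) = \sum_{i,j} \mathrm{cov}(X_{v_i}, X_{w_j})$. Item 2 extends to the joint law (the map $x \mapsto A^{-1}x \bmod \Z^2$ is measure-preserving on the torus), so $(X_P, X_Q) \deq (X_{AP}, X_{AQ})$ for any $A \in SL(2, \Z)$. Since such $A$ preserve affine lengths (and hence $v \circ w$) as well as the polygon identity $\sum v_i = 0$, I may assume WLOG (choosing a generic $A$) that no side of $P$ or $Q$ is axis-aligned, i.e.\ every primitive direction $\hat v_i = (a, b)$ has $ab \neq 0$.

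The key per-pair identity is that for such non-axis-aligned primitive $\hat v = (a,b)$ and $\hat w = (c,d)$,
$$
    \mathrm{cov}(X_v, X_w) = \frac{v \cdot w + v \circ w}{12}.
$$
By Statement~\ref{vecdist}, this equals $l_v l_w \, \mathrm{cov}(\lceil Y \rceil, \lceil Z \rceil)$ with $Y = bX_1 - aX_2$, $Z = dX_1 - cX_2$. Using $\lceil t \rceil = t + 1 - \{t\}$ a.s.\ to expand,
$$
    \mathrm{cov}(\lceil Y \rceil, \lceil Z \rceil) = \mathrm{cov}(Y, Z) - \mathrm{cov}(Y, \{Z\}) - \mathrm{cov}(\{Y\}, Z) + \mathrm{cov}(\{Y\}, \{Z\}).
$$
Independence of $X_1, X_2$ gives $\mathrm{cov}(Y, Z) = \hat v \cdot \hat w / 12$. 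The Fourier series $\{t\} = \tfrac12 - \sum_{n \neq 0} e^{2\pi i n t}/(2\pi i n)$ together with $\E e^{2\pi i k X_j} = 0$ for integer $k \neq 0$ (which relies on the non-axis-aligned hypothesis) makes the mixed covariances vanish. The last term becomes a double Fourier sum whose nonzero contributions come from integer $(n, m) \neq (0,0)$ satisfying $nb + md = 0$ and $na + mc = 0$; the determinant being $-\hat v \wedge \hat w$, nontrivial solutions exist iff $\hat v = \pm \hat w$, and $\sum_{n \neq 0} 1/n^2 = \pi^2/3$ then yields $(\hat v \circ \hat w)/12$, with sign $+1$ for $\hat v = \hat w$ and $-1$ for $\hat v = -\hat w$.

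Summing over all pairs, $\mathrm{cov}(X_P, X_Q) = \frac{1}{12} \sum_{i,j}(v_i \cdot w_j + v_i \circ w_j)$; the dot-product sum vanishes since $\sum_{i,j} v_i \cdot w_j = (\sum_i v_i) \cdot (\sum_j w_j) = 0$, leaving the claimed formula. The main obstacle is the Fourier evaluation of $\mathrm{cov}(\{Y\}, \{Z\})$: the resonance $\hat v \parallel \hat w$ must be isolated from the linear system, the geometric double sum carefully evaluated, and the signs tracked to match the $\circ$ convention. The $SL(2, \Z)$ reduction is what makes this pointwise Fourier argument apply uniformly without splitting into axis-aligned subcases.
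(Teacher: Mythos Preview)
Your argument is correct and takes a genuinely different route from the paper's. The paper works globally: it expresses $\cov(X_P,X_Q)$ via Poisson summation as $\sum_{m\in\Z^2\setminus 0}\hat\ind_P(m)\hat\ind_Q(-m)$ (Lemma~\ref{general}), computes $\hat\ind_P(m)$ explicitly in terms of the sides (Lemma~\ref{indfour}), and then collapses the lattice sum along each primitive direction to $\sum_{k\neq 0}\tfrac{1}{4\pi^2k^2}=\tfrac{1}{12}$. You instead decompose first into side contributions via Statement~\ref{baza}(4), and compute each $\cov(X_v,X_w)$ directly from the ceiling formula of Statement~\ref{vecdist} using only the one--dimensional Fourier series of $\{t\}$; the resonance condition $nb+md=na+mc=0$ plays the same role that the direction $m\parallel v$ plays in the paper's sum. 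Your approach buys you the pleasant per--pair identity $\cov(X_v,X_w)=\tfrac{1}{12}(v\cdot w+v\circ w)$ (for non--axis--aligned primitives), with the Euclidean part cancelling only after summation over the closed boundary---a structure the paper's proof hides. The price is the preliminary $SL(2,\Z)$ normalization to avoid axis--parallel sides, plus the observation that joint $SL(2,\Z)$--equivariance of $(X_P,X_Q)$ holds and that $v\circ w$ (though not $v\cdot w$) is $SL(2,\Z)$--invariant; both are easy but should be stated explicitly, as you do. The paper's route, by contrast, requires computing the two--dimensional Fourier transform of $\ind_P$ and proving Lemma~\ref{indfour} via reduction to the unit triangle, but in return yields the more general moment formula of Lemma~\ref{general} valid for arbitrary compact bodies in any dimension.
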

The main key of the proof of this theorem is the following general fact. 
\begin{lm}\label{general}
    Let $C_1, C_2, \ldots C_n \subset \R^d$ be compact sets, and let $k_1, k_2, \ldots k_n \in \Z_{> 0}$. Then  
    \begin{align}
        \E \prod_{i=1}^n(X_{C_i} - \E X_{C_i})^{k_i} = \sum_{m_j^{i} \in \Z^d \setminus 0 \atop \sum m_j^{i} = 0} \prod_{i=1}^n \prod_{j = 1}^{k_i} \hat \ind_{C_i}(m_j^i), \nonumber
    \end{align}
    where $\ind_C$ is an indicator function and $\hat \ind_C$ is its Fourier transform. 
\end{lm}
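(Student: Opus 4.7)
The plan is to expand $X_C$ as a Fourier series on the torus $[0,1]^d$, multiply out the product, and use orthogonality of characters. First I would view $X_C(x) = \sum_{m \in \Z^d} \ind_C(m - x)$ as a bounded $\Z^d$-periodic function of $x$ and compute its Fourier coefficients. Using the substitution $y = m - x$ and the fact that the translates $m - [0,1]^d$ tile $\R^d$, one gets
$$
\int_{[0,1]^d} X_C(x)\, e^{-2\pi i k \cdot x}\, dx \;=\; \int_{\R^d} \ind_C(y)\, e^{2\pi i k \cdot y}\, dy \;=\; \hat \ind_C(-k).
$$
In particular the zeroth Fourier coefficient equals $\vol C = \E X_C$, so in $L^2([0,1]^d)$
$$
X_C(x) - \E X_C \;=\; \sum_{m \in \Z^d \setminus 0} \hat \ind_C(m)\, e^{-2\pi i m \cdot x}.
$$

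Next I would multiply out
$$
\prod_{i=1}^n \bigl(X_{C_i}(x) - \E X_{C_i}\bigr)^{k_i} \;=\; \sum_{\substack{m_j^i \in \Z^d \setminus 0 \\ 1 \le i \le n,\, 1 \le j \le k_i}} \Bigl(\prod_{i,j} \hat \ind_{C_i}(m_j^i)\Bigr)\, e^{-2\pi i \bigl(\sum_{i,j} m_j^i\bigr) \cdot x}
$$
and integrate in $x$ over $[0,1]^d$. By orthogonality of the characters $x \mapsto e^{-2\pi i s \cdot x}$ for $s \in \Z^d$, only multi-indices with $\sum_{i,j} m_j^i = 0$ survive, which is exactly the formula in the lemma.

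The main obstacle is legitimizing the interchange of the multiple sum with the integral, since $\hat \ind_{C_i}$ is typically not in $\ell^1(\Z^d)$ and so the Fourier series of each factor converges only in $L^2$, not absolutely. I would handle this by approximating each $\ind_{C_i}$ by a smooth compactly supported bump $\varphi_i^\varepsilon$ whose Fourier transform decays rapidly; for the smooth versions every rearrangement is legal, and because each $X_{C_i}$ is uniformly bounded the limit $\varepsilon \to 0$ is then justified by dominated convergence. An equivalent self-contained route is to compute the zeroth Fourier coefficient of the product iteratively via Parseval: the two-factor case is the standard identity $\E(X_{C_1} - \E X_{C_1})(X_{C_2} - \E X_{C_2}) = \sum_{m \ne 0} \hat \ind_{C_1}(m)\, \overline{\hat \ind_{C_2}(m)}$, and the general case reduces to it by grouping factors and using that the Fourier coefficients of a product of bounded $L^2$ functions are the convolution of their coefficient sequences.
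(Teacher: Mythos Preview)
Your approach is essentially identical to the paper's: both obtain the expansion $X_C(x)-\E X_C=\sum_{m\neq 0}\hat\ind_C(m)e^{-2\pi i m\cdot x}$ (the paper cites the Poisson summation formula, while you compute the Fourier coefficients of the periodization directly via the tiling argument --- which is the same computation), then multiply out and use orthogonality of characters on $[0,1]^d$. If anything you are more careful than the paper, which performs the interchange of sum and integral purely formally and does not address the convergence issue you flag.
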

In the special case for two sets and $k_1 = k_2 = 1$ we have
\begin{align}\label{twoind}
    \cov (X_{C_1}, X_{C_2}) = \E (X_{C_1} - \E X_{C_1})(X_{C_2} - \E X_{C_2}) = \sum_{m \in \Z^d\setminus 0} \hat \ind_{C_1}(m) \hat \ind_{C_2}(-m).
\end{align}

The connection between Equation~\ref{twoind} and Theorem~\ref{covariance}  is established by the following lemma.
\begin{lm}\label{indfour}
    Let $P$ be an integer polygon with sides $v_1, v_2, \ldots v_n$ oriented in the counterclockwise order. Then for any $m \in \Z^2 \setminus 0$,
    \begin{align}
        \hat \ind_P(m) = \frac{1}{2\pi i|m|_{aff}^2} \sum_{j = 1}^n m^{\perp} \circ v_j,
    \end{align}
    where $m^{\perp}$ is equal to $m$ rotated by $\pi/2$.
\end{lm}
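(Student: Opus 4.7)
The natural approach is to rewrite the area integral defining the Fourier transform as a line integral around $\partial P$ via the divergence theorem, and then exploit the integrality of the vertices and side vectors to see that almost every boundary piece contributes zero. Adopting the convention $\hat f(\xi) = \int f(x)\, e^{2\pi i \langle \xi, x\rangle}\,dx$, I would set $F(x) = \frac{m}{2\pi i |m|^2}\, e^{2\pi i \langle m, x\rangle}$, which satisfies $\nabla \cdot F = e^{2\pi i \langle m, x\rangle}$, so that
\[
\hat\ind_P(m) = \sum_{j=1}^n \int_{E_j} \langle F, n_j\rangle\,ds,
\]
where $E_j$ denotes the $j$-th side and $n_j$ its outward unit normal.

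Next, I would parametrize $E_j$ as $a_j + t v_j$ for $t \in [0,1]$. Since $P$ is counterclockwise-oriented, the outward normal is $v_j$ rotated by $-\pi/2$, and using the identity $\langle m, R_{-\pi/2} v_j\rangle = \langle m^\perp, v_j\rangle = m \wedge v_j$ one obtains
\[
\int_{E_j} \langle F, n_j\rangle\,ds = \frac{m \wedge v_j}{2\pi i |m|^2}\cdot e^{2\pi i \langle m, a_j\rangle}\int_0^1 e^{2\pi i t \langle m, v_j\rangle}\,dt.
\]
Here is where integrality enters: because $a_j, v_j, m \in \Z^2$, the phase $e^{2\pi i \langle m, a_j\rangle}$ equals $1$, and the remaining integral equals $1$ when the integer $\langle m, v_j\rangle$ vanishes and $0$ otherwise. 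Thus only the sides with $v_j \perp m$, equivalently $v_j \parallel m^\perp$, survive.

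For such a surviving side, writing $v_j = \lambda\, m^\perp$ gives $m \wedge v_j = \lambda |m|^2$, so the side contributes $\lambda/(2\pi i)$. It remains to rewrite $\lambda$ in terms of the circle product. Factoring $m^\perp = |m|_{aff}\, e$ with $e$ primitive forces $v_j = k_j e$ for some $k_j \in \Z$ with $|k_j| = |v_j|_{aff}$, hence $\lambda = k_j/|m|_{aff}$. Checking the three cases of the definition of $\circ$ shows $m^\perp \circ v_j = |m|_{aff}\, k_j$ (with the signs matching correctly for $v_j \upuparrows m^\perp$ and $-v_j \upuparrows m^\perp$), so $\lambda = (m^\perp \circ v_j)/|m|_{aff}^2$. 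Substituting and summing over all sides --- including the non-surviving ones, which contribute zero on both sides since $m^\perp \circ v_j = 0$ there --- yields the claimed formula.

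The one genuine subtlety is sign hygiene: the conventions for the Fourier transform, for the rotation $m \mapsto m^\perp$, for the outward normal on a counterclockwise boundary, and for the signs in the definition of $v\circ w$ must all align in the same way, since opposite choices in any two of them would flip the final sign. Once those are fixed consistently, the analytic content reduces to the elementary observation that $\int_0^1 e^{2\pi i t k}\,dt$ vanishes for every nonzero $k \in \Z$.
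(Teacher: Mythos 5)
Your proof is correct, but it follows a genuinely different route from the paper's. You compute $\hat\ind_P(m)$ directly as a boundary integral via the divergence theorem, and the result drops out because integrality of the vertices kills the phases $e^{2\pi i\langle m,a_j\rangle}$ and integrality of $\langle m,v_j\rangle$ kills the oscillatory integral $\int_0^1 e^{2\pi i t\langle m,v_j\rangle}\,dt$ on every side not orthogonal to $m$; your identification of the surviving contribution with $(m^\perp\circ v_j)/|m|_{aff}^2$ is also right (both sign cases of $\circ$ give $m^\perp\circ v_j=|m|_{aff}k_j$ with $v_j=k_je$, as you say). The paper instead proves that both sides of the identity are additive under splitting $P$ into interior-disjoint integer polygons and equivariant under $SL(2,\Z)$ (via $f(AP,m)=f(P,A^Tm)$), reduces to the fundamental triangle $\Delta$ with vertices $(0,0),(0,1),(1,0)$, and checks that single case by hand. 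Your argument is more self-contained and makes the mechanism transparent in one pass; the paper's avoids vector calculus and fits its recurring use of lattice symmetries, at the cost of setting up the additivity and equivariance bookkeeping. On your ``sign hygiene'' worry: you are right that it is the only delicate point, and your conventions ($e^{+2\pi i\langle\xi,x\rangle}$ in the Fourier transform together with $m^\perp$ the counterclockwise rotation) are internally consistent and yield the stated formula. Note that the paper itself is ambiguous here: its Poisson-summation step in Lemma~\ref{general} forces the convention $\hat\ind_P(m)=\int_Pe^{-2\pi i\langle m,t\rangle}dt$, under which a direct check on $\Delta$ shows $m^\perp$ must be the \emph{clockwise} rotation for the lemma to hold as written. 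This discrepancy is harmless downstream, since Theorem~\ref{covariance} only uses products of the form $(m^\perp\circ v)(-m^\perp\circ w)$, which are insensitive to the choice of sign of $m^\perp$.
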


Let us note the connection between \(\cov (X_{C_1}, X_{C_2})\) and the cross covariogram function. For two compact subsets \(A, B \in \mathbb{R}^d\), the \textit{cross covariogram function} is defined as the function \(g_{A, B}(x) = \mathrm{vol}(A \cap (B + x))\). In the case where the sets \(A\) and \(B\) coincide, the function is called the \textit{covariogram function} and is denoted by \(g_A(x)\). In \cite{Martins}, it was shown that for compact sets \(A\) and \(B\), the following equality holds:
\begin{align}
    \sum_{n \in \mathbb{Z}^d}g_{A, B}(n) = \vol(A) \vol(B) + \sum_{m \in \mathbb{Z}^d \setminus 0} \hat{\ind}_{A}(m) \bar{\hat{\ind}}_{B}(m).
\end{align}
Note that \(\bar{\hat{\ind}}_{B}(m) = \hat{\ind}_{B}(-m)\) and \(\mathrm{vol}(A) \mathrm{vol}(B) = \int_{\mathbb{R}^d}g_{A, B}(x) \, dx\). Therefore, using \eqref{twoind}, we obtain expressions for covariance and variance in terms of \(g_{A, B}\).
\begin{cor}
    Let \(A, B \subset \mathbb{R}^d\) be compact sets. Then:
    \begin{align}
        \cov(X_A, X_B) &= \sum_{n \in \mathbb{Z}^d}g_{A, B}(n) - \int_{\mathbb{R}^d}g_{A, B}(x) \, dx,  \nonumber\\
        \D X_A &= \sum_{n \in \mathbb{Z}^d}g_{A}(n) - \int_{\mathbb{R}^d}g_{A}(x) \, dx.   \nonumber
    \end{align}
\end{cor}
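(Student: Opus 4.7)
The plan is to assemble the identity directly from the two ingredients the preceding paragraph already assembles: equation~\eqref{twoind}, which expresses the covariance as a Fourier series over nonzero lattice vectors, and the Martins identity relating $\sum_{n} g_{A,B}(n)$ to that same Fourier series plus a volume term. The content of the corollary is essentially the subtraction of these two identities, so the write-up should simply check that the "correction" term $\vol(A)\vol(B)$ in Martins' formula agrees with $\int_{\R^d} g_{A,B}(x)\,dx$ and that the mode $m=0$ is the one being absorbed.

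Concretely, I would first recall that for a real-valued indicator function, $\bar{\hat{\ind}}_B(m) = \hat{\ind}_B(-m)$, so that \eqref{twoind} can be rewritten as
\begin{align}
    \cov(X_A, X_B) = \sum_{m \in \Z^d \setminus 0} \hat{\ind}_A(m) \bar{\hat{\ind}}_B(m). \nonumber
\end{align}
Next I would verify the identity $\vol(A)\vol(B) = \int_{\R^d} g_{A,B}(x)\,dx$ by Fubini: writing $g_{A,B}(x) = \int_{\R^d} \ind_A(y)\ind_B(y-x)\,dy$ and integrating in $x$ first gives $\vol(B)$, leaving $\vol(A)$ after integrating in $y$. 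Then subtract this quantity from both sides of the Martins identity to get
\begin{align}
    \sum_{n \in \Z^d} g_{A,B}(n) - \int_{\R^d} g_{A,B}(x)\,dx = \sum_{m \in \Z^d \setminus 0} \hat{\ind}_A(m) \bar{\hat{\ind}}_B(m), \nonumber
\end{align}
which matches $\cov(X_A, X_B)$ by the previous step. The variance formula is the specialization $A = B$, where $g_{A,A} = g_A$ and $\cov(X_A, X_A) = \D X_A$.

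There is no real technical obstacle here since Martins' formula is invoked as a black box; the only thing to be careful about is the bookkeeping of complex conjugation versus sign reversal of $m$, and the matching of the $m=0$ Fourier mode with the integral $\int g_{A,B}\,dx$. If one wished to avoid citing Martins altogether, one could prove the needed identity directly by applying Poisson summation to the function $x \mapsto g_{A,B}(x) = (\ind_A * \widetilde{\ind_B})(x)$, whose Fourier transform is $\hat{\ind}_A \cdot \bar{\hat{\ind}}_B$; the $m=0$ term then contributes exactly $\vol(A)\vol(B) = \int g_{A,B}$, which is the cleanest way to see where the correction term comes from.
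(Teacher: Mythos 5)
Your proposal is correct and follows essentially the same route as the paper: it combines equation~\eqref{twoind} with the Martins identity, using $\bar{\hat{\ind}}_B(m) = \hat{\ind}_B(-m)$ and $\vol(A)\vol(B) = \int_{\R^d} g_{A,B}(x)\,dx$, then specializes to $A=B$ for the variance. The extra Fubini verification and the remark on proving the identity directly via Poisson summation applied to $\ind_A * \widetilde{\ind_B}$ are sound but not needed beyond what the paper already does.
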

Using the observation \(\D X_A \geqslant 0\), we obtain the following corollary.
\begin{cor}
    Let \(A \subset \mathbb{R}^d\) be a compact set. Then
    \begin{align}
        \sum_{n \in \mathbb{Z}^d}g_{A}(n) \geqslant \int_{\mathbb{R}^d}g_{A}(x) \, dx = \mathrm{vol}(A)^2.  \nonumber
    \end{align}
\end{cor}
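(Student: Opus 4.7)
The plan is to read this off directly from the previous corollary applied to the case $B = A$. That corollary gives the identity
\begin{align}
    \D X_A = \sum_{n \in \mathbb{Z}^d} g_A(n) - \int_{\mathbb{R}^d} g_A(x)\, dx, \nonumber
\end{align}
so the stated inequality is nothing other than the tautology $\D X_A \geqslant 0$. Hence the only real content left is to justify the equality $\int_{\mathbb{R}^d} g_A(x)\, dx = \vol(A)^2$, which is already implicit in the paragraph preceding the first corollary but should be spelled out here.

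First I would recall that by definition $g_A(x) = \vol(A \cap (A+x)) = \int_{\mathbb{R}^d} \ind_A(y)\,\ind_A(y - x)\, dy$. Then I would apply Fubini's theorem (everything is nonnegative and $A$ is compact, so there are no convergence issues) to swap the order of integration:
\begin{align}
    \int_{\mathbb{R}^d} g_A(x)\, dx = \int_{\mathbb{R}^d} \int_{\mathbb{R}^d} \ind_A(y)\,\ind_A(y - x)\, dy\, dx = \int_{\mathbb{R}^d} \ind_A(y) \left( \int_{\mathbb{R}^d} \ind_A(y-x)\, dx \right) dy. \nonumber
\end{align}
The inner integral equals $\vol(A)$ by translation invariance of Lebesgue measure, and the outer one then equals $\vol(A) \cdot \vol(A) = \vol(A)^2$.

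Combining this with $\D X_A \geqslant 0$ and the previous corollary yields the desired bound. There is no real obstacle here; the only subtlety worth noting is that $A$ is merely assumed compact (not convex, not a polygon), but the previous corollary is already stated at that level of generality, and the Fubini step requires nothing beyond compactness to guarantee integrability of $\ind_A$.
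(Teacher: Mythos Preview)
Your proposal is correct and matches the paper's approach exactly: the paper derives this corollary in one line from the preceding identity $\D X_A = \sum_{n} g_A(n) - \int g_A(x)\,dx$ together with the observation $\D X_A \geqslant 0$, and has already noted that $\int g_{A,B}(x)\,dx = \vol(A)\vol(B)$ in the paragraph before. Your Fubini computation makes that last equality explicit, which is a welcome addition but not a different method.
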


The full proofs of the lemmas and theorems presented in this paragraph can be found in Section~\ref{thproofs}.

\section{Integer triangle}
In this section, we describe the distribution of \(X_T\), when \(T\) is an integer triangle.

Let $T$ be an integer triangle with affine lengths $a, b$ and $c$. Assume that $\text{gcd} (a, b) = n$. Then $c$ is also divisible by $n$ and triangle $\frac 1 n T$ has an integer vertices. Note that $T = \frac 1 n T \oplus \frac 1 n T \oplus \ldots \oplus \frac 1 n T$, so by Statement \ref{baza} we have $X_T \ceq n X_{\frac 1 n T}$. Therefore, it is sufficient to consider triangles whose affine side lengths are coprime.

Before formulating the main result we introduce the notation. Let $n$ be an integer number. Denote by $U_n$ the random variable with uniform distribution on set $\{0, 1, \ldots, n-1\}$.

\begin{tm}
    Let $T$ be an integer triangle with affine lengths of sides $a, b, c$. Assume that $a, b$ and $c$ are coprime. Then 
    $$
        X_T \cdeq U_a + U_b + U_c + U_2,
    $$
    where all variables in the right--hand side are independent.
\end{tm}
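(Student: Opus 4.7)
The plan is to reduce $X_T$ to a single integer-valued random variable $W$ and then match its characteristic function against that of $U_a + U_b + U_c + U_2$. Let $v_1, v_2, v_3$ be the counterclockwise sides of $T$ with $|v_i|_{aff} = a_i$, where $(a_1, a_2, a_3) = (a, b, c)$, let $e_i' := v_i / a_i$ be the primitive generators, and set $Y_i := X \wedge e_i'$. Combining item~4 of Statement~\ref{baza} with Statement~\ref{vecdist} yields $X_T \ceq \sum_{i=1}^3 a_i \lceil Y_i \rceil$. Since $v_1+v_2+v_3 = 0$, we have $\sum_i a_i Y_i = 0$, and the almost-sure identity $\lceil y \rceil = y + 1 - \{y\}$ rewrites this sum as $(a+b+c) - W$, where
\[
    W := a\{Y_1\} + b\{Y_2\} + c\{Y_3\}.
\]
Thus $X_T \ceq -W$ modulo a constant, and the symmetry $X_T \cdeq -X_T$ (item~7 of Statement~\ref{baza}) upgrades this to $X_T \cdeq W$, reducing the theorem to identifying the distribution of the integer-valued variable $W$.

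A key preliminary observation is that $\gcd(a,b,c) = 1$ actually forces $a, b, c$ to be pairwise coprime: any prime $p$ dividing two of them divides both coordinates of the corresponding two $v_i$, and then the relation $v_1+v_2+v_3 = 0$ together with the primitivity of the third $e_i'$ forces $p$ to divide the third affine length as well. This pairwise coprimality is the crucial algebraic input that will make the Fourier sum below collapse to a single-parameter series.

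The main computation is $\varphi_W(t) := \E e^{itW}$. For each factor I use the Fourier series on $[0,1)$,
\[
    e^{ita\{y\}} = (e^{ita}-1) \sum_{k \in \Z} \frac{e^{2\pi iky}}{i(ta - 2\pi k)},
\]
multiply the three expansions together, and take expectation against $X \sim U([0,1]^2)$. The orthogonality relation $\E e^{2\pi i X \wedge u} = \delta_{u,0}$ for $u \in \Z^2$ keeps only triples $(k_1,k_2,k_3)$ with $k_1 e_1' + k_2 e_2' + k_3 e_3' = 0$, and by pairwise coprimality these are precisely $k_i = m a_i$ for a single integer $m$. Summing over $m$ via the cotangent identity $\sum_{m\in\Z}(t-2\pi m)^{-3} = \cos(t/2)/(8\sin^3(t/2))$ and simplifying with $e^{ita}-1 = 2i e^{ita/2}\sin(ta/2)$ gives
\[
    \varphi_W(t) = \frac{e^{it(a+b+c)/2}\cos(t/2)\,\sin(ta/2)\sin(tb/2)\sin(tc/2)}{abc\,\sin^3(t/2)}.
\]
A direct computation from $\E e^{itU_n} = e^{it(n-1)/2}\sin(tn/2)/(n\sin(t/2))$ together with $\sin t = 2\sin(t/2)\cos(t/2)$ shows that the characteristic function of $U_a + U_b + U_c + U_2$ equals the same expression multiplied by $e^{-it}$; hence $W \deq U_a + U_b + U_c + U_2 + 1$, and the additive constant disappears under $\cdeq$.

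The main obstacle is justifying the Fourier step rigorously: the series $\sum_k (ta-2\pi k)^{-1}$ is only conditionally convergent, so the expansion and the subsequent interchange of summation and integration must be handled carefully, for instance via symmetric partial sums together with a dominated-convergence argument. As a consistency check, the formula above recovers $\D X_T = (a^2+b^2+c^2)/12$ from Theorem~\ref{variance} via $\D(U_n) = (n^2-1)/12$.
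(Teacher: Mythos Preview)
Your argument is correct and gives a genuinely different proof from the paper's. The paper proceeds in three combinatorial steps: first it bounds the range of $X_T$ by $a+b+c-1$ values via the same identity $X_T \ceq (a+b+c)-W$ that you derive; then it shows $X_T \bmod a \deq U_a$ (and likewise for $b,c$) by an $SL(2,\Z)$ change of coordinates; finally it argues that the generating polynomial $G(z)=\E z^{X_T'}$ has degree at most $a+b+c-2$ and is divisible by each $F_n(z)=\tfrac{1}{n}(1+z+\cdots+z^{n-1})$, leaving only a linear factor that symmetry and Theorem~\ref{variance} (or Pick's theorem) force to be $F_2$. Your route bypasses the modular step entirely and computes the characteristic function of $W$ in closed form via the Fourier expansion of $e^{ita\{y\}}$ and lattice orthogonality; the identification with $U_a+U_b+U_c+U_2$ then drops out of a cotangent identity. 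The paper's argument is more elementary and avoids any convergence issues, while yours is more direct and explains structurally \emph{why} the extra $U_2$ appears (it is exactly the $\cos(t/2)$ coming from the third derivative of $\cot(t/2)$).

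Two small remarks. First, your pairwise-coprimality observation is correct and worth stating, but it is not strictly needed for the kernel claim: since any two of $e_1',e_2',e_3'$ are linearly independent, the map $(k_1,k_2,k_3)\mapsto\sum k_i e_i'$ has rank~$2$, so its kernel is a rank-one sublattice of $\Z^3$ containing the primitive vector $(a,b,c)$, hence equal to $\Z(a,b,c)$. Second, the convergence issue you flag is genuine but benign: the function $y\mapsto e^{ita\{y\}}$ is of bounded variation, so its symmetric Dirichlet partial sums are uniformly bounded and converge pointwise off a null set; dominated convergence then justifies multiplying the three truncated series and passing to the limit under $\E$. Alternatively one can compute $\varphi_W$ only at $t\in\tfrac{2\pi}{N}\Z$ for $N$ coprime to $abc$, where each factor becomes a finite exponential sum and no limit is needed, and conclude by continuity of characteristic functions.
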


\begin{proof}
    \textbf{Step 1.} First, we prove that $X_T$ takes at most $a+b+c-1$ different values. Indeed, let $a{v_1}, b{v_2}, c{v_3}$ be the sides of triangle (vectors are oriented in the counterclockwise order). Then by Statement~\ref{baza} and Statement~\ref{vecdist} we have
    \begin{align}\label{stepone}
        X_T \ceq aX_{v_1} + bX_{v_2} + cX_{v_3} \ceq a\lceil X \wedge v_1 \rceil + b\lceil X \wedge v_2 \rceil + c \lceil X \wedge v_3 \rceil.
  \end{align}
    Let \(\{\cdot\}\) denote the fractional part of a number. Then $\lceil X \wedge v \rceil = (X \wedge v ) + 1 - \{X \wedge v \}$ a.s. and we can continue (\ref{stepone}) as
    \begin{multline}
        a\lceil X \wedge v_1 \rceil + b\lceil X \wedge v_2 \rceil + c \lceil X \wedge v_3 \rceil =\nonumber\\
        = a+b+c + a(X \wedge v_1) + b(X \wedge v_2) + c(X \wedge v_3) \nonumber\\
        - a\{X \wedge v_1 \} - b\{X \wedge v_2 \} - c\{X \wedge v_3 \}.
    \end{multline}
    Using the linearity of $\wedge$ and the fact that $av_1 + bv_2 + cv_3 = 0$, we can simplify the formula: 
    \begin{align}
        a(X \wedge v_1) + b(X \wedge v_2) + c(X \wedge v_3) = X \wedge (av_1 + bv_2 + cv_3) = 0.
    \end{align} 
    Therefore,
    \begin{align}\label{steponeres}
        X_T \ceq a+b+c - a\{X \wedge v_1 \} - b\{X \wedge v_2 \} - c\{X \wedge v_3 \}. 
    \end{align}
    Moreover, the right--hand side of (\ref{steponeres}) is an integer number. It remains to note that this number is greater than 0 and less than $a+b+c$. Hence, the random variable in the right--hand side takes at most $a+b+c-1$ values, and $X_T$ has the same property.

    \textbf{Step 2.} Our next goal is to prove the following fact:
    \begin{align}\label{steptwo}
        X_T \text{ mod } a ~\deq~ U_a.
    \end{align}
    Similarly for other sides:
    \begin{align}
        X_T \text{ mod } b ~\deq~ U_b,    \nonumber
        \\
        X_T \text{ mod } c ~\deq~ U_c.    \nonumber
    \end{align}
    To prove (\ref{steptwo}), let us present the sides of $T$ as in step 1: $av_1$, $bv_2$, $cv_3$. Let $A \in SL(2, \Z)$ be an operator such that $Av_2 = e_1$ where $e_1 = (1, 0)^T$. Let us denote the images of $v_1$ and $v_3$ as $v_1'$ and $v_3'$: $Av_1 = v_1',$ $Av_3 = v_3'$.  Using Statement~\ref{baza} and (\ref{stepone}) we have
    \begin{align}
        X_T \deq X_{AT} \ceq a\lceil X \wedge v_1' \rceil + b \lceil X \wedge e_1 \rceil + c \lceil X \wedge v_3'  \rceil.
    \end{align}
    Consequently, 
    \begin{multline}
        X_T \text{ mod } a \cdeq ( a\lceil X \wedge v_1' \rceil + b \lceil X \wedge  e_1 \rceil + c \lceil X \wedge v_3' \rceil ) \text{ mod } a =\\= (b + c \lceil X \wedge v_3' \rceil) \text{ mod } a.
    \end{multline}
    Since $a, b, c$ are coprime, it remains to prove that $\lceil X \wedge v_3' \rceil \text{ mod } a \deq U_a$. To do this, we note that $av_1' + be_1 + cv_3' = 0$. The second coordinate of $av_1 + be_1$ is divisible by $a$, so the second coordinate of $v_3'$ is also divisible by $a$. Hence, $X \wedge v_3' $ can be rewritten as $alX_1 - kX_2$, where $v_3' = (k, al)$ and $X = (X_1, X_2)^T$. Note that for any fixed $X_2 = x_2$ the random variable $\lceil alX_1-kx_2 \rceil \text{ mod } a$ has uniform distribution on $\{0, 1, \ldots a-1\}$. Therefore, the random variable  $\lceil  alX_1 - kX_2\rceil \text{ mod } a$ also has the same distribution. Hence, $\lceil X \wedge v_3' \rceil \text{ mod } a \deq U_a$ and we have completed Step 2.

    \textbf{Step 3.} In this step we will finish the proof of the theorem by using the generating function technique. According to the first step, $X_T$ takes at most $a+b+c$ consecutive integer values. Consequently, there is an integer constant $C$ such that $X_T + C \in \{0, 1, \ldots a+b+c-1\}$ a.s. Moreover, we can assume that $\Prob \{X_T + C = 0\} > 0$. We define $X_T' = X_T + C$ and note that $X_T \ceq X_T'$. By Step 2, $X_T \text{ mod }a \deq U_a$, so $X_T'$ has the same property. Consider a function
    \begin{align}
        G(z) = \E z^{X_T'}.
    \end{align}
    Note that $G(z)$ is a polynomial of degree at most $a+b+c-2$ by Step 1. Moreover, $G(\xi) = 0$ for any $\xi$ such that $\xi ^ a = 1$ by Step 2. Hence $G(z)$ is divisible by  $F_a(z) = \frac 1 a (1 + z + z^2 + \ldots z^{a-1})$. Since $a, b, c$ are coprime, $G$ can be represented as 
    \begin{align}
        G(z) = F_a(z)F_b(z)F_c(z)H(z),
    \end{align}
    where $H(z)$ is a polynomial of degree at most 1. By the assumption, $\Prob\{X_T' = 0\} \ne 0$, so $G(0) \ne 0$. Moreover, $X_T'$ has a symmetric distribution by Statement~\ref{baza}, hence the leading coefficient of $G$ is equal to $G(0)$. Therefore $H(z)$ can be either a constant or a polynomial of the form 
    $$
        H(z) = \frac 1 2 (1 + z) = F_2(z).
    $$
    In the first case we have $G(z) = F_a(z)F_b(z)F_c(z)$ and $X_T' \deq U_a + U_b + U_c$. Then $\D X_T = \D X_T' = \frac{a^2 + b^2 +c^2-3}{12}$, but this contradicts Theorem~\ref{variance}. Consequently, $G(z) = F_a(z)F_b(z)F_c(z)F_2(z)$ and $X_T \ceq X_T' \deq U_a + U_b + U_c + U_2$.
    
\end{proof}

\textbf{Remark 1.} The end of the proof can be done without Theorem~\ref{variance}. Indeed, let $G(z) = F_a(z)F_b(z)F_c(z)$. Then $X_T' \deq U_a + U_b + U_c$ and $\E X_T' = \frac{a+b+c - 3}{2}$. Since $X_T$ takes integer values, there exists an integer constant $k$ such that $X_T = X_T' + k$ a.s. and $\E X_T = \frac{a+b+c - 3}{2} + k$. Moreover, $\E X_T = \area(T)$ by Statement~\ref{baza}, so 
\begin{align}\label{rem11}
    \area(T) = \frac{a+b+c - 3}{2} + k.
\end{align}
On the other hand, by Pick's theorem,
\begin{align}\label{rem12}
    \area(T) = \frac{a + b + c - 2}{2} + i,
\end{align}
where $i$ is the number of integer points inside $T$. It follows from (\ref{rem11}) and (\ref{rem12}) that $\frac 1 2$ must be an integer, a contradiction. Therefore, $X_T' \ne U_a + U_b + U_c$.

\textbf{Remark 2.} At the first step of the proof, it was demonstrated that the number of potential values that $X_T$ can take is bounded from above by $a + b + c - 1$. A similar conclusion can be reached for a polygon with an arbitrary number of sides by employing a parallel line of reasoning.
\begin{cor}
    Let $\per_{aff}(P)$ be the affine perimeter of an integer polygon $P$. Then $X_P$ can take at most $\per_{aff}(P) - 1$ different values.
\end{cor}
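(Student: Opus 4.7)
The plan is to repeat Step~1 of the preceding proof, this time for a polygon with an arbitrary number of sides. Let the sides of $P$, oriented counterclockwise, be $V_1,\ldots,V_n$; write $l_i = |V_i|_{aff}$, and let $v_i = V_i/l_i$ be the primitive integer vector along $V_i$. Combining item~4 of Statement~\ref{baza} with Statement~\ref{vecdist} gives
\begin{align}
    X_P \ceq \sum_{i=1}^{n} X_{V_i} \ceq \sum_{i=1}^{n} l_i \lceil X \wedge v_i \rceil. \nonumber
\end{align}

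Using $\lceil y \rceil = y + 1 - \{y\}$ termwise together with the bilinearity of $\wedge$, I would rewrite the right-hand side as
\begin{align}
    \per_{aff}(P) + X \wedge \Bigl(\sum_{i=1}^{n} V_i\Bigr) - \sum_{i=1}^{n} l_i \{X \wedge v_i\}. \nonumber
\end{align}
Since the sides of a closed polygon sum to zero, the middle term vanishes and I obtain, in direct analogy with formula~(\ref{steponeres}),
\begin{align}
    X_P \ceq \per_{aff}(P) - \sum_{i=1}^{n} l_i \{X \wedge v_i\} \quad \text{a.s.} \nonumber
\end{align}

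The right-hand side equals $\sum_i l_i \lceil X \wedge v_i \rceil$ almost surely, hence is integer-valued, and each summand $l_i \{X \wedge v_i\}$ lies in $[0, l_i)$, so the full sum lies in $[0, \per_{aff}(P))$. Thus the right-hand side lies in $(0, \per_{aff}(P)]$. The only obstacle is ruling out the endpoint $\per_{aff}(P)$: attaining it would force $\{X \wedge v_i\} = 0$ for every $i$; but $P$ has nonempty interior, so at least two of the $v_i$'s are linearly independent, and the simultaneous integrality of two independent linear forms in $X$ confines $X$ to a discrete subset of $[0,1]^2$, which has measure zero. Hence almost surely the right-hand side takes values in $\{1, 2, \ldots, \per_{aff}(P) - 1\}$, and because $X_P$ differs from it by an a.s.\ constant, the same bound of $\per_{aff}(P) - 1$ distinct values applies to $X_P$ itself.
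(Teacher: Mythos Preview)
Your proof is correct and is precisely the parallel line of reasoning the paper alludes to in Remark~2, namely the direct extension of Step~1 of the triangle argument to an $n$-gon. One small simplification: since the identity $\lceil y\rceil = y+1-\{y\}$ already requires $y\notin\Z$, on the almost-sure event where you apply it each $\{X\wedge v_i\}$ is automatically positive, so the separate linear-independence argument ruling out the endpoint $\per_{aff}(P)$ is not needed.
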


\section{Proofs of Theorem~\ref{variance} and Theorem~\ref{covariance}}\label{thproofs}
This section is organized as follows. First, we prove Lemma~\ref{general} and Lemma~\ref{indfour}. Then we derive Theorem~\ref{covariance} from this lemmas and have Theorem~\ref{variance} as a corollary.

\begin{proof}[Proof of Lemma~\ref{general}]
Let us first concentrate on the random variable $X_C$ for compact set $C \subset \R^d$. By definition of $X_C$, we have
\begin{align}
    X_C(\bx) = \sum_{m \in \Z^d}\ind_{C + \bx}(m) = \sum_{m \in \Z^d}\ind_{C}(m - \bx). \nonumber
\end{align}
By the Poisson summation formula, 
\begin{align}
    \sum_{m \in \Z^d}\ind_{C}(m - \bx) = \sum_{m \in \Z^d}\hat\ind_{C}(m)e^{2\pi i \langle m, - \bx \rangle}.\nonumber
\end{align}
Hence, we have
\begin{align}\label{thproofone}
    X_C(\bx) = \sum_{m \in \Z^d}\hat\ind_{C}(m)e^{2\pi i \langle m, - \bx \rangle}.
\end{align}
Note, that 
\begin{align}\label{thprooftwo}
    \int_{[0,1]^d}e^{2\pi i \langle m, - \bx \rangle} d\bx = 
    \begin{cases}
        0 \quad &\text{if }m \ne 0,\\
        1 \quad &\text{if }m = 0.
    \end{cases}
\end{align}
Therefore, 
\begin{align}\label{thproofthree}
    \E X_C = \int_{[0,1]^d} X_C(\bx)~d\bx = \int_{[0,1]^d}\sum_{m \in \Z^d}\hat\ind_{C}(m)e^{2\pi i \langle m, - \bx \rangle}~d\bx = \hat \ind_C(0).
\end{align} 
Combining (\ref{thproofone}) and (\ref{thproofthree}) we have
\begin{align}
    X_C(\bx) - \E X_C = \sum_{m \in \Z^d\setminus 0}\hat\ind_{C}(m)e^{2\pi i \langle m, - \bx \rangle}.
\end{align}
Now we can rewrite the left-hand side of the statement of the lemma as
\begin{multline}
    \E \prod_{j = 1}^n (X_{C_j} - \E X_{C_j})^{k_j} = \int_{[0,1]}\prod_{j = 1}^n \left(\sum_{m \in \Z^d\setminus 0}\hat\ind_{C_j}(m)e^{2\pi i \langle m, - \bx \rangle}\right)^{k_j} ~d\bx =\\=
    \int_{[0,1]^d} \sum_{m_l^{j} \in \Z^d \setminus 0 } \left( \prod_{j=1}^n \prod_{j = l}^{k_j} \hat \ind_{C_i}(m_j^i) \right)e^{\langle \sum m^j_l, -\bx\rangle}~d\bx.
\end{multline}
All summands with non-zero $m^j_l$ vanish according to (\ref{thprooftwo}). Therefore,
\begin{align}
    \int_{[0,1]^d} \sum_{m_l^{j} \in \Z^d \setminus 0 } \left( \prod_{j=1}^n \prod_{j = l}^{k_j} \hat \ind_{C_i}(m_j^i) \right)e^{\langle \sum m^j_l, -\bx\rangle}~d\bx =  \sum_{m_l^{j} \in \Z^d \setminus 0 \atop \sum m^j_l = 0} \left( \prod_{j=1}^n \prod_{j = l}^{k_j} \hat \ind_{C_i}(m_j^i) \right),
\end{align}
and we have now completed the proof of Lemma~\ref{general}.

\end{proof}

\begin{proof}[Proof of Lemma~\ref{indfour}]
Let us consider two functions:
\begin{align}
    &f(P, m) = \hat \ind_P(m);  \nonumber\\ 
    &g(P, m) = \frac{1}{2\pi i |m|^2_{aff}} \sum_{j = 1}^n m^{\perp} \circ v_j. \nonumber
\end{align}
Our goal is to prove that $f(P, m) = g(P, m)$ for any integer polygon $P$ and $m \in \Z^2\setminus 0$. First, note that if $P = P_1 \cup P_2$ for integer polygons such that $\mathrm{int} P_1 \cap \mathrm{int} P_2 = \varnothing$ then for any $m \in \Z \setminus 0$ we have
\begin{align}
    f(P, m) = f(P_1, m) + f(P_2, m);\label{funf}\\
    g(P, m) = g(P_1, m) + g(P_2, m).\label{fung}
\end{align}
Indeed, for the function $f$ we have 
\begin{multline}
    f(P, m) = \hat \ind_P(m) = \int_{P} e^{-2\pi i\langle m, t \rangle}~dt =\\= \int_{P_1} e^{-2\pi i\langle m, t \rangle}~dt + \int_{P_2} e^{-2\pi i\langle m, t \rangle}~dt = f(P_1, m) + f(P_2, m).
\end{multline}
To prove (\ref{fung}), we denote the common sides of polygons $P_1$ and $P_2$ as $w_1, w_2, \ldots w_k$ for $P_1$ and $-w_1, -w_2, \ldots -w_k$ for $P_2$. By definition of $\circ$ we have $m^\perp \circ w = -m^\perp \circ (-w) $, therefore
\begin{multline}
    g(P, m) = \frac{1}{2\pi i |m|^2_{aff}} \sum_{j = 1}^n m^{\perp} \circ v_j = \\=\frac{1}{2\pi i |m|^2_{aff}} \left(\sum_{j = 1}^n m^{\perp} \circ v_j + \sum_{l=1}^k m^{\perp} \circ w_l + \sum_{l=1}^k m^{\perp} \circ (-w_l)\right) =\\= g(P_1, m) + g(P_2,m).\nonumber
\end{multline}
According to (\ref{funf}) and (\ref{fung}), we can consider that $P$ is a triangle with no integer points on its sides and inside. Furthermore, the problem can be reduced to a fixed triangle by making the following observation. Let $A \in SL(2, \Z)$, then 
\begin{align}
    f(AP, m) = f(P, A^Tm),\label{funfA}\\
    g(AP, m) = g(P, A^Tm).\label{fungA}
\end{align}
Indeed, to prove (\ref{funfA}), it is enough to change the variables in the integral:
\begin{multline}
    f(AP, m) = \hat \ind_{AP}(m) = \int_{AP} e^{-2\pi i\langle m, t \rangle}~dt =\\= \int_P e^{-2\pi i\langle m, As \rangle}~ds = \int_P e^{-2\pi i\langle A^Tm, s \rangle}~ds = f(P, A^Tm).\nonumber
\end{multline}
To prove (\ref{fungA}) we need to check that $|Am|_{aff} = |m|_{aff}$ and $m^\perp \circ (Av_j) = (A^Tm)^\perp \circ v_j$. Since $A\Z^2 = \Z^2$, the number of points on $m$ is equal to the number of points on $Am$. Therefore $|Am|_{aff}=|m|_{aff}$. Similarly, $|(A^Tm)^\perp|_{aff} = |m^\perp|$ and $|Av_j|_{aff} = |v_j|_{aff}$. Moreover,
\begin{align}
    m^\perp \circ (Av_j) \ne 0 \iff \langle m, Av_j \rangle = 0 \iff \langle A^Tm, v_j\rangle = 0 \iff (A^Tm)^\perp \ne 0.
\end{align}
Hence, $m^\perp \circ (Av_j) = (A^Tm)^\perp \circ v_j$ and $g(AP, m) = g(P, Am)$.

Now we are ready to complete the proof of Lemma~\ref{indfour}. As previously indicated, it can be assumed that $P$ is an integer triangle with no integer points on its sides and interior. By Pick's theorem, the area of such triangle is equal to $\frac 1 2$. Hence, there exists an operator $A \in SL(2, \Z)$ such that $A\Delta = P$, where $\Delta$ is a triangle with vertex coordinates equal to $(0,0); (0,1);(1,0)$. Moreover, $f(P, m) = f(A\Delta, m) = f(\Delta, A^Tm)$ and $g(P, m) = g(A\Delta, m) = g(\Delta, A^Tm)$. Therefore, it is sufficient to verify that $f(\Delta, m) = g(\Delta, m)$ for any $m \in \mathbb{Z}^2 \setminus 0$. By simple calculations,
\begin{align}
    f(\Delta, m) = 
    \begin{cases}
        \frac{1}{2\pi i k}  \quad &\text{if } m = (0,k)^T, (k, 0)^T\text{ or } (k, k)^T,\\
        0   \quad &\text{otherwise}.
    \end{cases}
\end{align}
By definition, $g(\Delta, m)$ takes the same values, which completes the proof.

\end{proof}
\begin{proof}[Proof of Theorem~\ref{covariance}]
According to Lemma~\ref{general} we have
\begin{align}\label{thprooffour}
    \cov (X_P, X_Q) = \E (X_P - \E X_P)(X_Q - \E X_Q) = \sum_{m \in \Z^2\setminus 0} \hat \ind_P(m) \hat \ind_Q(-m).
\end{align}
Using Lemma 2, we can continue the equality
\begin{align}\label{thprooffive}
    \sum_{m \in \Z^2\setminus 0} \hat \ind_P(m) \hat \ind_Q(-m) = \sum_{m \in \Z^2 \setminus 0} -\frac 1 {4\pi^2|m|^4_{aff}}\left(\sum_{i = 1}^{n_1} \sum_{j = 1}^{n_2}(m^{\perp}\circ v_i)(-m^{\perp} \circ w_j)\right).
\end{align}
Note that if $v \nparallel w$, then $(m^{\perp}\circ v)(-m^{\perp} \circ w) = 0$. Alternatively, if $v \parallel w$, then
\begin{align}
    (m^{\perp}\circ v)(-m^{\perp} \circ w) = 
    \begin{cases}
        -|m|_{aff}^2|v|_{aff}|w|_{aff} \quad &\text{if } m \parallel v,\\
        0 \quad &\text{otherwise.}\nonumber
    \end{cases}
\end{align}
It is therefore possible to transform the following sum: 
\begin{multline}\label{thproofsix}
    \sum_{m \in \Z^2\setminus 0} -\frac 1 {4\pi^2|m|^4_{aff}} (m^{\perp}\circ v)(-m^{\perp} \circ w) = 
    \sum_{m \in \Z^2\setminus 0 \atop m \parallel v} \frac {|m|_{aff}^2 v \circ w} {4\pi^2|m|^4_{aff}} = \\=
    v \circ w \cdot \sum_{k \in \Z \setminus 0}\frac 1{4\pi^2 k^2} = \frac {v \circ w} {12}.
\end{multline}
Combining (\ref{thprooffour}), (\ref{thprooffive}) and (\ref{thproofsix}) we have
\begin{multline}
    \cov (X_P, X_Q) =\\
    =\sum_{m \in \Z^2 \setminus 0} -\frac 1 {4\pi^2|m|^4_{aff}}\left(\sum_{i = 1}^{n_1} \sum_{j = 1}^{n_2}(m^{\perp}\circ v_i)(-m^{\perp} \circ w_j)\right)=\\ = \frac 1 {12} \sum_{i = 1}^{n_1} \sum_{j = 1}^{n_2} v_i\circ w_j, \nonumber
\end{multline}
which completes the proof.

\end{proof}

\begin{proof}[Proof of Theorem~\ref{variance}]
    By assumption, the polygon has no parallel sides, so $v_i \circ v_j = 0$ for $i \ne j$. Then by Theorem~\ref{covariance} we have
    \begin{align}
        \D X_P = \cov (X_P, X_P) = \frac{1}{12} \sum_{i = 1}^n \sum_{i = 1}^n v_i \circ v_j = \frac 1 {12} \sum_{i = 1}^n v_i \circ v_i = \frac 1 {12} \sum_{i = 1}^n l_i^2. \nonumber
    \end{align}
\end{proof}

\section{Proofs of Statement~\ref{vecdist} and Statement~\ref{baza}}\label{statproof}
\begin{proof}[Proof of Statement~\ref{vecdist}]
    Let us first note that $X_v = |v|_{aff}X_{v/|v|_{aff}}$ by definition. Therefore, we can assume that $|v|_{aff} = 1$ and prove that $X_v = \lceil X \wedge v\rceil$.

    Note that $m \wedge v \in \Z$ for any integer point $m \in \Z^2$. Let us fix $k \in \Z^2$ and сonsider the set $L_k = \{m \in \Z^2 \colon m \wedge v = k\}.$ All points in $L_k$ lie on the line parallel to $v$. Furthermore, the distance of any two points in $L_k$ is at least $|v|_{\R^2}$. Therefore, any parallelogram $x \oplus v$ cane take at most 1 point from $L_k$. If $k < 0$ then $x\oplus v \cap L_k \ne \varnothing$ if and only if $x\wedge v \leqslant k$. Similarly for $k > 0$, $x \oplus L_k \ne \varnothing$ if and only if $x \wedge v \geqslant k$. Therefore,
    \begin{align}
        X_v = |\{k \in \Z \colon (X\oplus v) \cap L_k \ne \varnothing| + \ind_{\geqslant 0}(X \wedge v) = \lceil X \wedge v \rceil. \nonumber
    \end{align}
\end{proof}
\begin{proof}[Proof of Statement~\ref{baza}]
1. By definition,
\begin{align}
    X_P = \sum_{m \in \Z^2} \ind_{P+X}(m) = \sum_{m \in \Z^2} \ind_P(m - X).
\end{align}
Note that 
\begin{align}
    \E \ind_P(m - X) = \int_{m - [0,1]^2}\ind_P~d\bx = \area\left((m-[0,1]^2) \cap P\right).
\end{align}
Therefore, 
\begin{multline}
    \E X_P = \sum_{m \in \Z^2} \area\left((m-[0,1]^2) \cap P\right) = \area \left(\left(\bigcup_{m \in \Z^2}(m-[0,1]^2)\right) \cap P\right) =\\= \area(\R^2 \cap P) = \area(P).
\end{multline}

2. Note that $A\Z^2 = Z^2$ and $X \deq AX/\Z^2$. Therefore,
\begin{multline}
    X_{AP} = \sum_{m \in \Z^2} \ind_{AP+X}(m) =
    \sum_{m \in \Z^2} \ind_{AP+X}(Am) \deq \\ \deq
    \sum_{m \in \Z^2} \ind_{AP+AX}(Am) = 
    \sum_{m \in \Z^2} \ind_{A+X}(m) = 
    X_P.
\end{multline}

3. Similar to item 2.

4. Let $v_1, v_2, \ldots v_n$ be the sides of $P$ and $a_1, a_2, \dots a_n$ be its vertices. Then, 
$$
\ind_{P+\bx} - \ind_{P} \equiv \sum_{i = 1}^n \text{sgn} (v_i \wedge \bx)\ind_{a_i + v_i \oplus \bx}.
$$
If we integrate this equation over $\Z^2$ with respect to the counting measure, we obtain
$$
    |(P+\bx) \cap \Z^2| - |P \cap \Z^2| = \sum_{i = 1}^n \text{sgn} (v_i \wedge \bx) |v_i\oplus \bx \cap P|.
$$
Therefore, 
$$
    X_P - |P \cap \Z^2| = \sum_{i = 1}^n X_{v_i},
$$
and hence,
$$
    X_P \ceq \sum_{i = 1}^n X_{v_i}.
$$

5. If $\{v_i\}_{i = 1}^{n_1}$ are the sides of $P$ and $\{w_j\}_{i = 1}^{n_2}$ are the sides of $Q$ then $\{v_i\}_{i = 1}^{n_1} \cup \{w_j\}_{i = 1}^{n_2}$ are the sides of $P \oplus Q$. Therefore, according to item 4, we have
$$
    X_{P\oplus Q} \ceq \sum_{i = 1}^{n_1} X_{v_i} + \sum_{j = 1}^{n_2}X_{w_j} \ceq X_P + X_Q.
$$

6. The centrally symmetric polygon is a sum of integer segments: $P = s_1\oplus s_2\oplus\ldots\oplus s_n$. Any segment $s$ is a polygon with two sides and $X_s = 0$ almost surely. Then $X_P \ceq X_{s_1} + X_{s_2} + \ldots + X_{s_n} = 0$ a.s.

7. By item 3, we have $X_P \deq X_{-P}$. Note that $P \oplus(-P)$ is a centrally symmetric polygon. So, according to items 5 and 6,
$$
X_P + X_{-P} \ceq X_{P\oplus(-P)} \ceq 0.
$$
Therefore,
$X_P \ceq -X_{-P} \deq -X_P,$
so $X_P \cdeq -X_P$.

\end{proof}

\end{document}